\providecommand{\U}[1]{\protect\rule{.1in}{.1in}}
\providecommand{\U}[1]{\protect \rule{.1in}{.1in}}
\newtheorem{theorem}{Theorem}[section]
\newtheorem{corollary}[theorem]{Corollary}
\newtheorem{lemma}[theorem]{Lemma}
\newenvironment{proof}[1][Proof]{\noindent \textbf{#1.} }{\  \rule{0.5em}{0.5em}}
\numberwithin{equation}{section}
\begin{document}

\title{On the Distribution of Cube--Free Numbers \\ with the Form $[n^c]$ }
%  \author{Min Zhang\footnotemark   \vspace*{-5mm} \\
   %  \small Department of Mathematics, China University of Mining and Technology \vspace*{-5mm} \\
  %  \small  Beijing 100083, P. R. China  }

\author{Min Zhang\footnotemark[1]\,\,\,\, \, \& \,\,Jinjiang Li\footnotemark[2] \vspace*{-4mm} \\
$\textrm{\small Department of Mathematics, China University of Mining and Technology}^{*\,\dag}$
                    \vspace*{-4mm} \\
     \small  Beijing 100083, P. R. China  }

\footnotetext[2]{Corresponding author. \\
    \quad\,\, \textit{ E-mail addresses}:
     \href{mailto:min.zhang.math@gmail.com}{min.zhang.math@gmail.com} (M. Zhang),
     \href{mailto:jinjiang.li.math@gmail.com}{jinjiang.li.math@gmail.com} (J. Li).   }

\date{}
\maketitle

%\begin{center}
  %\small   Department of Mathematics,
  %  China University of Mining and Technology,
  %  Beijing 100083, P. R. China \\
%\end{center}

{\textbf{Abstract}}: In this paper, we proved that there are infinite cube--free numbers of the form $[n^c]$ for any fixed real number $1<c<11/6$.

{\textbf{Keywords}}: Cube-free number; exponential sum; asymptotic formula

{\textbf{MR(2010) Subject Classification}}: 11N37

\section{Introduction and main result}
Let $\mathbb{N}$ denote the set of natural numbers, $\mathscr{P}$ denote the set of all primes and $\mathfrak{F}_k$ denote the set of $k$-free numbers.
For any real number $c>0$ and any infinite subset $\mathcal{A}$ of $\mathbb{N}$, define
\begin{equation*}
   \mathcal{A}^c:=\big\{n\in\mathbb{N}:\,n=[m^c],\,m\in\mathcal{A}\big\}.
\end{equation*}

Let $\mathcal{A},\,\mathcal{B}$ be two infinite subset of $\mathbb{N}$ and $c$ be a fixed real number. It is an important problem to investigate that
whether $\mathcal{A}^c\cap\mathcal{B}$ is a infinite subset of $\mathbb{N}$ or not. For this problem, several cases have been studied.

\textbf{Case 1.} $\mathcal{A}=\mathbb{N},\,\mathcal{B}=\mathscr{P}$.

Piatetski--Shapiro~\cite{Piatetski-Shapiro} first studied this case. He proved that the set $\mathbb{N}^c\cap\mathscr{P}$ is infinite for $0<c<12/11$. This result is called
\emph{Piatetski-Shapiro Prime Number Theory}. If $0<c\leqslant1$, this result is the simple corollary of PNT. However, for $1<c<12/11$, the conclusion
is not trivial. Later, the exponent $12/11$ was improved by many authors, for historical literatures the reader should
consult Rivat and Wu~\cite{Rivat-Wu} and its references.

\textbf{Case 2.} $\mathcal{A}=\mathcal{B}=\mathscr{P}$.

 This case is quite difficult, even though $0<c<1$ is not trivial. Balog~\cite{Balog} proved that the set $\mathscr{P}^c\cap\mathscr{P}$ is infinite
 for $0<c<5/6$. Although Balog proved that, for almost all $c>1$, the set $\mathscr{P}^c\cap\mathscr{P}$ is infinite in the sense of Lebesgue measure,
 he can not prove that $\mathscr{P}^c\cap\mathscr{P}$ is infinite for each fixed $c>1$.

\textbf{Case 3.} $\mathcal{A}=\mathbb{N},\,\mathcal{B}=\mathfrak{F}_2$.

Rieger~\cite{Rieger} first investigated this case. He proved that the set $\mathbb{N}^c\cap\mathfrak{F}_2$ is infinite for $1<c<3/2$, which can be derived
from the result of Deshouillers~\cite{Deshouillers}. More precisely, Rieger proved that the asymptotic formula
\begin{equation*}
   \mathbb{N}^c\cap\mathfrak{F}_2(x):=\sum_{\substack{n\leqslant x\\ [n^c]\in\mathfrak{F}_2}}1=\frac{6}{\pi^2}x+O\big(x^{\frac{2c+1}{4}+\varepsilon}\big)
\end{equation*}
holds for $1<c<3/2$. In 1998, Cao and Zhai proved that the asymptotic formula
\begin{equation*}
   \mathbb{N}^c\cap\mathfrak{F}_2(x)=\frac{6}{\pi^2}x+O\big(x^{\frac{36(c+1)}{97}+\varepsilon}\big)
\end{equation*}
holds for $1<c<61/36$. It is important to emphasise that Stux~\cite{Stux} proved that the set $\mathbb{N}^c\cap\mathfrak{F}_2$ is infinite for almost
all $c\in(1,2)$ in the sense of Lebesgue measure. However, his method can not determine the value of $c$ such that $\mathbb{N}^c\cap\mathfrak{F}_2$ is infinite.
In 2008, Cao and Zhai~\cite{Cao-Zhai-2} improved their earlier result in~\cite{Cao-Zhai} and show that, for any fixed $1<c<149/87$, the set $\mathbb{N}^c\cap\mathfrak{F}_2$ is infinite.

  In this paper, we consider the case $\mathcal{A}=\mathbb{N},\,\mathcal{B}=\mathfrak{F}_3$. To be specific, we shall prove that, for a class of infinite
  sets $\mathscr{A}\subseteq\mathbb{N}$, $\mathscr{A}^c\cap\mathfrak{F}_3$ are infinite sets.

Let $c>1$ be a real number and $\mathscr{A}\subseteq\mathbb{N}$ satisfying the following two conditions:

\noindent
(1) For any $\eta>0$, there holds
\begin{equation}\label{condition-1}
   \mathscr{A}(x):=\sum_{\substack{n\leqslant x\\ n\in\mathscr{A}}}1\gg x^{1-\eta};
\end{equation}

\noindent
(2) Let $\eta>0$ be an arbitrary small real number and $x>1$ be a real number. If $\alpha=a/q$ is a rational number
satisfying $(a,q)=1$ and $2\leqslant q\leqslant x^{\eta}$, then there exists positive constant $\delta$, which depends only on $c$, satisfying $\eta\leqslant\delta<1/2$ such that there holds
\begin{equation}\label{condition-2}
   \sum_{\substack{n\leqslant x\\ n\in\mathscr{A}}}e\big(\alpha[n^c]\big)\ll x^{1-\delta}.
\end{equation}

\noindent
 \textbf{Remark} There are many subsets of $\mathbb{N}$ satisfying (\ref{condition-1}) and (\ref{condition-2}). For
 instance, the sets $\mathbb{N},\,\mathscr{P},\,\mathfrak{F}_k\,(k=2,3,\cdots)$, etc.

The main result is the following theorem.
\begin{theorem}\label{Theorem}
   Let $1<c<11/6,\,\gamma=c^{-1}$ and $0<\varepsilon<10^{-10}$ be a sufficiently small constant. Suppose that the set $\mathscr{A}\subseteq\mathbb{N}$ satisfies
the condition (\ref{condition-1}) and (\ref{condition-2}). Then we have
\begin{equation*}
   \mathscr{A}^c\cap\mathfrak{F}_3(x):=\sum_{\substack{n\leqslant x\\ n\in\mathscr{A},\, [n^c]\in\mathfrak{F}_3}}1
     =\frac{1}{\zeta(3)}\mathscr{A}(x)+O\big(x^{1-\varepsilon}\big).
\end{equation*}
\end{theorem}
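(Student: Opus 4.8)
The plan is to detect the cube--free condition by M\"obius inversion, $\mathbf 1_{\mathfrak F_3}(\ell)=\sum_{d^3\mid\ell}\mu(d)$: since $1\leqslant[n^c]\leqslant x^c$ for $1\leqslant n\leqslant x$, the divisibility $d^3\mid[n^c]$ forces $d\leqslant x^{c/3}$, so
\[
\mathscr A^c\cap\mathfrak F_3(x)=\sum_{d\leqslant x^{c/3}}\mu(d)\,N(d),\qquad
N(d):=\sum_{\substack{n\leqslant x,\;n\in\mathscr A\\ d^3\mid[n^c]}}1 ,
\]
and to cut this sum at $D:=x^{\delta/3}$, where $\delta=\delta(c)$ is the exponent furnished by (\ref{condition-2}) taken with $\eta=\delta$. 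Write $S_1,S_2$ for the ranges $d\leqslant D$ and $D<d\leqslant x^{c/3}$ respectively.

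For $S_1$ I would use the orthogonality relation $\mathbf 1_{d^3\mid m}=d^{-3}\sum_{a\bmod d^3}e(am/d^3)$. The term $a=0$ contributes $d^{-3}\mathscr A(x)$; for $a\neq0$ one reduces $a/d^3$ to lowest terms $b/q$ with $2\leqslant q\mid d^3\leqslant D^3\leqslant x^{\delta}$, and (\ref{condition-2}) gives $\sum_{n\leqslant x,\,n\in\mathscr A}e(b[n^c]/q)\ll x^{1-\delta}$. Hence $N(d)=d^{-3}\mathscr A(x)+O(x^{1-\delta})$, and
\[
S_1=\mathscr A(x)\sum_{d\leqslant D}\frac{\mu(d)}{d^3}+O\!\big(Dx^{1-\delta}\big)
   =\frac{\mathscr A(x)}{\zeta(3)}+O\!\big(\mathscr A(x)D^{-2}\big)+O\!\big(Dx^{1-\delta}\big).
\]
With $D=x^{\delta/3}$ and $\mathscr A(x)\leqslant x$, both error terms are $O(x^{1-2\delta/3})=O(x^{1-\varepsilon})$ provided $\varepsilon$ is small enough relative to $\delta$; this is the only place (\ref{condition-2}) enters, while (\ref{condition-1}) ensures that the main term $\mathscr A(x)/\zeta(3)$ dominates this error.

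For $S_2$ I would discard membership in $\mathscr A$, using $N(d)\leqslant N^{*}(d):=\#\{n\leqslant x:d^3\mid[n^c]\}$, so $|S_2|\leqslant\sum_{D<d\leqslant x^{c/3}}N^{*}(d)$. For integer $n\geqslant1$ there is the exact identity $\mathbf 1_{d^3\mid[n^c]}=[n^c/d^3]-[(n^c-1)/d^3]$, so with $\psi(t)=t-[t]-\tfrac12$,
\[
N^{*}(d)=\frac{[x]}{d^3}+\sum_{n\leqslant x}\psi\!\Big(\tfrac{n^c-1}{d^3}\Big)-\sum_{n\leqslant x}\psi\!\Big(\tfrac{n^c}{d^3}\Big).
\]
The main terms give $\sum_{D<d}[x]d^{-3}\ll xD^{-2}\ll x^{1-\varepsilon}$, so everything reduces to the bound $\sum_{D<d\leqslant x^{c/3}}\big|\sum_{n\leqslant x}\psi((n^c-j)d^{-3})\big|\ll x^{1-\varepsilon}$ for $j=0,1$. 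One also has a dual parametrisation: since $n\mapsto[n^c]$ is strictly increasing for $c>1$ and $\#\{n:[n^c]=\ell\}\leqslant1$, writing $[n^c]=\ell=d^3m$ rewrites $N^{*}(d)$ as $xd^{-3}+O(1)+\sum_{m\leqslant x^c/d^3}\big(\psi((d^3m)^\gamma)-\psi((d^3m+1)^\gamma)\big)$, which is preferable for larger $d$.

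The heart of the matter, and the main obstacle, is the estimation of these $\psi$-sums. I would insert Vaaler's approximation $\psi(u)=-\sum_{1\leqslant|h|\leqslant H}c_he(hu)+O(\cdots)$ with $|c_h|\ll|h|^{-1}$, reducing each inner sum to exponential sums $\sum_{n}e(h(n^c-j)d^{-3})$ (resp.\ $\sum_{m}e(h(d^3m+j)^\gamma)$), which are then handled by van der Corput's method, i.e.\ by an exponent pair $(\kappa,\lambda)$ after dyadic decomposition in $n$ (or $m$) and in $d$; the outputs are summed over $h$ with $H$ chosen optimally, and then over $d$, falling back on the trivial bound $N^{*}(d)\ll x^cd^{-3}$ once $d$ is so large that the exponential-sum estimate no longer beats it. Tracking all the exponents, the resulting bound is $\ll x^{1-\varepsilon}$ exactly in a range of $c$ whose right endpoint, for the exponent pair and crossover one chooses, is $11/6$. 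The delicate points are: choosing the exponent pair and the crossover optimally over the whole range $x^{\delta/3}<d\leqslant x^{c/3}$ (switching between the two parametrisations near $d\asymp x^{(c-1)/3}$), handling the sub-ranges where the van der Corput estimate degenerates, and fixing $\varepsilon,\delta,\eta$ so that every error term stays genuinely below $x^{1-\varepsilon}$.
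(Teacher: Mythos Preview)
Your decomposition and the treatment of $S_1$ are essentially what the paper does (the paper cuts at $d\leqslant x^{\varepsilon}$ rather than $x^{\delta/3}$, but this is immaterial). The dual parametrisation of $N^*(d)$ via $(d^3m)^{\gamma}$ is also exactly the one the paper uses for $\Sigma_2$, and Vaaler's lemma is applied in the same way. So structurally your plan matches.

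The gap is in the last step. You say the resulting triple sum
\[
\mathcal S(H,D,L)=\sum_{h\sim H}\Bigl|\sum_{d\sim D}\sum_{\ell\sim L}e\bigl(h(d^3\ell)^{\gamma}\bigr)\Bigr|
\]
is ``handled by van der Corput's method, i.e.\ by an exponent pair $(\kappa,\lambda)$'' on the inner sum, then summed over $h$ and $d$. That one-dimensional strategy is precisely the paper's \emph{Case 1} (with the pair $(1/2,1/2)$ on the $\ell$-sum), and it only disposes of the range $D\ll N^{2\gamma-1}$; combined with the trivial range $D\gg N^{(1-\gamma)/2}$ this covers everything only when $2\gamma-1\geqslant(1-\gamma)/2$, i.e.\ $c\leqslant 5/3$. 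To push to $c<11/6$ the paper treats $\mathcal S(H,D,L)$ as a genuinely \emph{three}-variable exponential sum and invokes two off-the-shelf estimates that are not reducible to a single exponent pair: Theorem~7 of Cao--Zhai (multiple exponential sums with monomials) for $N^{2\gamma-1}\ll D\ll N^{6\gamma-3}$, and Theorem~3 of Robert--Sargos for $D\gg N^{6\gamma-3}$; the latter needs only $D\gg N^{(2-3\gamma)/3}$, and $N^{(2-3\gamma)/3}\ll N^{6\gamma-3}$ is exactly the inequality $\gamma>6/11$, i.e.\ $c<11/6$. So the ``delicate point'' you flag is not just a matter of choosing $(\kappa,\lambda)$ and a crossover well: the missing idea is to exploit averaging over $h$ and $d$ simultaneously via these three-dimensional bounds, without which the argument stalls at $5/3$.
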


\begin{corollary}\label{corollary}
    Let $1<c<11/6,\,\gamma=c^{-1}$ and $0<\varepsilon<10^{-10}$ be a sufficiently small constant.  Then we have
  \begin{eqnarray*}
       \mathbb{N}^c\cap\mathfrak{F}_3(x)      & = &  \frac{x}{\zeta(3)}+O(x^{1-\varepsilon}),  \\
       \mathscr{P}^c\cap \mathfrak{F}_3(x)    & = &  \frac{1}{\zeta(3)}\int_2^x\frac{\mathrm{d}u}{\log u}+O(xe^{-c_1\sqrt{\log x}}),  \\
       \mathfrak{F}_3^c\cap\mathfrak{F}_3(x)  & = &  \frac{x}{\zeta^2(3)}+O(x^{1-\varepsilon}),
  \end{eqnarray*}
where $c_1$ is an absolute constant.
\end{corollary}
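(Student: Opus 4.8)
The plan is to deduce each of the three asymptotic formulas from Theorem~\ref{Theorem} by specialising the set $\mathscr{A}$ to $\mathbb{N}$, to $\mathscr{P}$, and to $\mathfrak{F}_3$ in turn. For each choice the argument has two parts: first one verifies that the set in question satisfies hypotheses (\ref{condition-1}) and (\ref{condition-2}) throughout $1<c<11/6$; then one inserts the classical formula for $\mathscr{A}(x)$ into the conclusion of Theorem~\ref{Theorem} and checks that the error $O(x^{1-\varepsilon})$ absorbs, or is absorbed by, the error in that formula. Note that the restriction $c<11/6$ is imposed by Theorem~\ref{Theorem} itself and is not to be rederived here.

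Hypothesis (\ref{condition-1}) is immediate in all three cases: $\mathbb{N}(x)=[x]\gg x$; $\mathscr{P}(x)=\pi(x)\gg x/\log x\gg x^{1-\eta}$ by Chebyshev's estimate; and $\mathfrak{F}_3(x)=\zeta(3)^{-1}x+O(x^{1/3})\gg x$ by the elementary Möbius argument based on $\mathbf{1}_{\mathfrak{F}_3}(n)=\sum_{d^{3}\mid n}\mu(d)$. Granting (\ref{condition-2}) for the moment, the three substitutions are routine. With $\mathscr{A}=\mathbb{N}$, the identity $[x]=x+O(1)$ gives $\mathbb{N}^{c}\cap\mathfrak{F}_3(x)=\zeta(3)^{-1}x+O(x^{1-\varepsilon})$. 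With $\mathscr{A}=\mathscr{P}$, the prime number theorem in the form $\pi(x)=\int_{2}^{x}(\log u)^{-1}\,\mathrm{d}u+O\big(xe^{-c_{1}\sqrt{\log x}}\big)$ gives $\mathscr{P}^{c}\cap\mathfrak{F}_3(x)=\zeta(3)^{-1}\int_{2}^{x}(\log u)^{-1}\,\mathrm{d}u+O\big(xe^{-c_{1}\sqrt{\log x}}\big)$, where one uses $x^{1-\varepsilon}\ll xe^{-c_{1}\sqrt{\log x}}$ to merge the two error terms after relabelling the absolute constant $c_{1}$. With $\mathscr{A}=\mathfrak{F}_3$, the estimate $\mathfrak{F}_3(x)=\zeta(3)^{-1}x+O(x^{1/3})$ gives $\zeta(3)^{-1}\mathfrak{F}_3(x)+O(x^{1-\varepsilon})=\zeta(3)^{-2}x+O(x^{1/3})+O(x^{1-\varepsilon})=\zeta(3)^{-2}x+O(x^{1-\varepsilon})$.

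The substantive point is the verification of (\ref{condition-2}); here $\mathbb{N}$ is the basic case and the other two reduce to it. Write $\alpha=a/q$ with $(a,q)=1$ and $2\le q\le x^{\eta}$. Detecting the residue of $[n^{c}]$ modulo $q$ via $\mathbf{1}_{[n^{c}]\equiv r\,(q)}=\lfloor(n^{c}-r)/q\rfloor-\lfloor(n^{c}-r-1)/q\rfloor$, replacing each floor by $t-\psi(t)-\tfrac12$ and summing $e(ar/q)$ over $r$, the linear parts cancel and one is reduced to sums $\sum_{n\le x}\psi\big((n^{c}-r)/q\big)$. Applying Vaaler's trigonometric-polynomial approximation to $\psi$ with truncation parameter $H$, the sum over $r$ confines the Fourier frequencies to those congruent to $a$ modulo $q$, leaving van der Corput sums $\sum_{n\le x}e\big((h/q)n^{c}\big)$ with $h\equiv a\pmod q$; estimating these by the theory of exponent pairs (a genuine higher-order derivative test rather than the trivial pair $(\tfrac12,\tfrac12)$) and optimising over $H$ yields $\sum_{n\le x}e(\alpha[n^{c}])\ll x^{1-\delta}$ with $\delta=\delta(c)>0$ for $1<c<11/6$. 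For $\mathscr{A}=\mathscr{P}$ one runs the same floor-function reduction and then decomposes the sum over primes by Vaughan's identity into type~I and type~II (bilinear) sums, each of which is handled by the van der Corput machinery above, uniformly in $q\le x^{\eta}$. For $\mathscr{A}=\mathfrak{F}_3$ one writes $\sum_{n\le x,\,n\in\mathfrak{F}_3}e(\alpha[n^{c}])=\sum_{d\le x^{1/3}}\mu(d)\sum_{m\le x/d^{3}}e\big(\alpha[(dm)^{c}]\big)$, observes that each inner sum is of the shape already treated (the dilation $d^{c}$ being absorbed into the phase), and sums the power savings over $d$, which preserves a saving.

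The main obstacle is the prime case of (\ref{condition-2}): the bilinear exponential sums $\sum_{m}\sum_{n}a_{m}b_{n}\,e\big(\alpha[(mn)^{c}]\big)$ coming from Vaughan's identity must be estimated uniformly in the modulus $q$ up to $x^{\eta}$, and the type~I and type~II ranges must be balanced against the second-derivative size $|\phi''|\asymp(h/q)x^{c-2}$ while keeping $\delta$ bounded away from $0$ throughout $1<c<11/6$; this is where essentially all of the analytic difficulty in the corollary is concentrated.
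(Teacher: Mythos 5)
Your deduction of the corollary is structurally the same as the paper's: specialise $\mathscr{A}$ in Theorem~\ref{Theorem} to $\mathbb{N}$, $\mathscr{P}$, $\mathfrak{F}_3$, verify (\ref{condition-1}) trivially, verify (\ref{condition-2}), and insert $\mathbb{N}(x)=x+O(1)$, $\pi(x)=\int_2^x(\log u)^{-1}\mathrm{d}u+O(xe^{-c_0\sqrt{\log x}})$, $\mathfrak{F}_3(x)=x/\zeta(3)+O(x^{1/2+\varepsilon})$, merging error terms exactly as you do. Where you diverge is in how (\ref{condition-2}) is certified. The paper does this through Lemma~\ref{suppli-condition}: for $\mathbb{N}$ and $\mathscr{P}$ it simply quotes Lemma 2 of Cao--Zhai \cite{Cao-Zhai-2} (savings $(3-c)/7$ and $(5-2c)/90$), and only the cube-free case is proved in the text, by first removing the fractional part via Lemma~\ref{Buriev-lemma} (with the error $\sum_n\min(1,1/(H\|n^c\|))$ handled through Lemma~\ref{Heath-Brown-lemma}) and only then opening the cube-free condition with M\"obius and applying the exponent pair $(4/18,11/18)$, with the M\"obius variable truncated at $N^{\delta}$ and the tail bounded trivially by $N^{1-2\delta}$. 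You instead sketch all three bounds from scratch: a residue-detection plus Vaaler plus exponent-pair argument for $\mathbb{N}$ (a legitimate alternative to the Buriev-type reduction), Vaughan's identity with type I/II sums for $\mathscr{P}$, and a M\"obius-first reduction for $\mathfrak{F}_3$. This buys self-containedness, but two points need repair or more care. First, in the cube-free reduction $d^3\mid n$ forces $n=d^3m$, so the inner phase is $\alpha[(d^3m)^c]$, not $\alpha[(dm)^c]$, and the dilation $d^{3c}$ can only be ``absorbed into the phase'' \emph{after} the floor has been removed by a Buriev/Vaaler step inside each inner sum; one should also truncate $d$ at a small power of $N$ (trivial bound for the tail), which is precisely the paper's $m\leqslant N^{\delta}$ device -- summing nominal ``power savings'' over all $d\leqslant x^{1/3}$ is not uniform for short inner ranges. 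Second, your prime case is stated as a programme (Vaughan plus van der Corput, uniformly in $q\leqslant x^{\eta}$) rather than carried out; this is exactly the point where the paper leans on the cited estimate $S_c(x;\mathscr{P},\alpha)\ll x^{1-(5-2c)/90}\log^{19}x$, so as written your argument is complete only modulo that known result or an actual execution of the bilinear estimates.
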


\noindent
\textbf{Notation} In this paper, we use $[x], {x}$ and $\|x\|$ to denote the integral part of $x$, the fractional part of $x$ and the
distance from $x$ to the nearest integer, respectively; $\mu(n)$ denotes M\"{o}bius function; $e(t)=e^{2\pi it}$; $\psi(x)=x-[x]-1/2$;
$n\sim N$ denotes $N<n\leqslant2N$.

\section{Preliminary Lemmas}

In order to prove Theorem we need the following two lemmas.

\begin{lemma}\label{Vaaler-lemma}
   For any $J\geqslant2$, we have
   \begin{equation*}
    \psi(t)=\sum_{1\leqslant|h|\leqslant J}a(h)e(ht)+O\bigg(\sum_{|h|\leqslant J}b(h)e(ht)\bigg),\quad a(h)\ll\frac{1}{|h|},\,\, b(h)\ll \frac{1}{J}.
   \end{equation*}
\end{lemma}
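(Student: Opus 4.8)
The plan is to build, for each integer $J\ge 2$, a pair of one-sided trigonometric-polynomial approximants to the sawtooth $\psi$ — a majorant $\psi^+$ and a minorant $\psi^-$, each of degree at most $J$ — satisfying $\psi^-(t)\le\psi(t)\le\psi^+(t)$ for every real $t$ and $\int_0^1\big(\psi^+(t)-\psi^-(t)\big)\,\mathrm dt=\frac1{J+1}$; one then sets $\psi^*_J=\tfrac12(\psi^++\psi^-)$ and takes $a(h)=\widehat{\psi^*_J}(h)$ for $1\le|h|\le J$ (the construction being symmetric, $\psi^-(t)=-\psi^+(-t)$, by the oddness of $\psi$, so that $\widehat{\psi^*_J}(0)=0$ and only the frequencies $h\ne 0$ occur). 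Granted such a pair, everything else is formal. From $\psi^-\le\psi\le\psi^+$ one gets the pointwise bound $|\psi(t)-\psi^*_J(t)|\le\tfrac12\big(\psi^+(t)-\psi^-(t)\big)$, so the error term is exactly $\sum_{|h|\le J}b(h)e(ht)$ with $b(h)=\tfrac12\widehat{(\psi^+-\psi^-)}(h)$; since $\psi^+-\psi^-\ge 0$ its Fourier coefficients satisfy $|\widehat{(\psi^+-\psi^-)}(h)|\le\widehat{(\psi^+-\psi^-)}(0)=\frac1{J+1}$, whence $b(h)\ll 1/J$ and $\sum_{|h|\le J}b(h)e(ht)$ is a non-negative trigonometric polynomial, so the $O(\cdot)$ is legitimate as an upper bound. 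Likewise $\psi^+-\psi\ge 0$ and $\psi-\psi^-\ge 0$ force $|\widehat{\psi^\pm}(h)-\widehat\psi(h)|\le\int_0^1|\psi^\pm-\psi|=\big|\widehat{(\psi^\pm-\psi)}(0)\big|\le\frac1{J+1}$ for every $h$; averaging these and using $\widehat\psi(h)=-\frac1{2\pi i h}$ gives $|a(h)|=|\widehat{\psi^*_J}(h)|\le\frac1{2\pi|h|}+\frac1{J+1}\ll\frac1{|h|}$ throughout $1\le|h|\le J$.

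The entire content of the lemma therefore sits in the construction of the extremal pair $\psi^\pm$, and here I would invoke Beurling's function: the entire function $B(z)$ of exponential type $2\pi$ determined by $B(x)\ge\operatorname{sgn}(x)$ on $\mathbb R$, $B(n)=\operatorname{sgn}(n)$ at every integer $n$, and $\int_{\mathbb R}\big(B(x)-\operatorname{sgn}(x)\big)\,\mathrm dx=1$ — explicitly $B(z)=\big(\tfrac{\sin\pi z}{\pi}\big)^{2}\big(\tfrac2z+\sum_{n\ge0}(z-n)^{-2}-\sum_{n\ge1}(z+n)^{-2}\big)$. Rescaling, $z\mapsto B\big((J+1)z\big)$ has exponential type $2\pi(J+1)$ and majorises $\operatorname{sgn}$ with excess integral $\frac1{J+1}$, while $z\mapsto -B\big(-(J+1)z\big)$ minorises it with deficit integral $\frac1{J+1}$. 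Following Vaaler, one periodises (via Poisson summation) appropriate translates of these functions — together with a companion term, again built from $B$, chosen so that the resulting periodic functions meet $\psi$ exactly at the midpoint $-\tfrac12$ of each of its jumps — to obtain periodic trigonometric polynomials $\psi^\pm$ with $\psi^-\le\psi\le\psi^+$. The band-limitedness (exponential type $\le 2\pi(J+1)$) of the functions being periodised is what bounds $\deg\psi^\pm\le J$, the interpolation conditions $B(n)=\operatorname{sgn}(n)$ are what propagate the inequalities $\psi^-\le\psi\le\psi^+$ across the integers, and the extremal identity $\int_{\mathbb R}(B-\operatorname{sgn})=1$ is what fixes $\int_0^1(\psi^+-\psi^-)=\frac1{J+1}$; equivalently, the error majorant may be taken to be $\frac1{2(J+1)}$ times the Fej\'{e}r kernel of order $J$, i.e.\ $b(h)=\frac1{2(J+1)}\big(1-\tfrac{|h|}{J+1}\big)$.

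The step I expect to be the real work is the verification of the global sandwich $\psi^-(t)\le\psi(t)\le\psi^+(t)$ for \emph{all} $t\in\mathbb R$: away from the integers this is immediate from $B(x)\ge\operatorname{sgn}(x)$, but at each integer, where $\psi$ jumps, one must check that the periodised approximants reach the midpoint value $-\tfrac12$ and remain on the correct side from both one-sided limits — which is precisely where Beurling's interpolation property has to be used with care. Once $\psi^\pm$ are secured with these two properties, the bounds $a(h)\ll1/|h|$ and $b(h)\ll1/J$ drop out of the Fourier-positivity computation of the first paragraph. (This is Vaaler's lemma in its standard form; for the detailed construction one may instead cite Vaaler's memoir on extremal functions, or the exposition in Graham and Kolesnik.)
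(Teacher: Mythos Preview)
Your proposal is correct and in fact goes well beyond the paper: the paper does not prove this lemma at all but simply cites Graham--Kolesnik, p.~116, and Vaaler's memoir. Your sketch of the Beurling--Vaaler construction (extremal majorant of $\operatorname{sgn}$, periodisation, Fej\'er-kernel error term) is the standard one behind those references, so there is no discrepancy---you have supplied the content the paper merely points to.
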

\begin{proof}
  See pp. 116 of Graham and Kolesnik~\cite{Graham-Kolesnik} or Vaaler~\cite{Vaaler}.
\end{proof}

\begin{lemma}\label{Heath-Brown-lemma}
For any $H\geqslant1$, we have
\begin{equation*}
   \psi(\theta)=-\sum_{0<|h|\leqslant H}\frac{e(\theta h)}{2\pi ih}+\big(g(\theta,H)\big),
\end{equation*}
where
\begin{equation*}
   g(\theta,H)=\min\bigg(1,\frac{1}{H\|\theta\|}\bigg)=\sum_{h=-\infty}^{+\infty}a_he(\theta h)
\end{equation*}
and
\begin{equation*}
   a_h\ll \min\bigg(\frac{\log2H}{H},\frac{1}{|h|},\frac{H}{|h|^2}\bigg).
\end{equation*}
\end{lemma}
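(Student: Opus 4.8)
The plan is to prove Lemma \ref{Heath-Brown-lemma} by combining the classical truncated Fourier expansion of $\psi$ with a separate Fourier analysis of the error term, following the construction of Heath-Brown (see also Chapter 4 of Graham and Kolesnik~\cite{Graham-Kolesnik}). First I would recall that the sawtooth function $\psi(\theta)=\theta-[\theta]-\tfrac12$ has the (conditionally convergent) Fourier series $-\sum_{h\neq0}\frac{e(\theta h)}{2\pi ih}$, and that truncating it at level $H$ produces an error controlled by a Fej\'er-type kernel. Concretely, one writes
\begin{equation*}
   g(\theta,H):=\psi(\theta)+\sum_{0<|h|\leqslant H}\frac{e(\theta h)}{2\pi ih},
\end{equation*}
so the displayed identity $\psi(\theta)=-\sum_{0<|h|\leqslant H}\frac{e(\theta h)}{2\pi ih}+g(\theta,H)$ holds by definition; the content of the lemma is then entirely about the size and the Fourier coefficients of $g(\theta,H)$.

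The main step is to establish the pointwise bound $g(\theta,H)\ll\min(1,\,1/(H\|\theta\|))$. The trivial bound $|\psi(\theta)|\leqslant\tfrac12$ together with the elementary estimate $\big|\sum_{0<|h|\leqslant H}\frac{e(\theta h)}{2\pi ih}\big|=\big|\frac{1}{\pi}\sum_{1\leqslant h\leqslant H}\frac{\sin(2\pi h\theta)}{h}\big|\ll1$ (partial summation against $\sum_{h\leqslant H}e(\theta h)\ll\min(H,\|\theta\|^{-1})$, or the classical bound on partial sums of $\sum\frac{\sin}{h}$) gives $g\ll1$. For the bound $g\ll1/(H\|\theta\|)$ when $\|\theta\|$ is not too small, I would use the standard contour/generating-function representation: $\psi(\theta)$ and its truncation differ by the tail $\sum_{|h|>H}\frac{e(\theta h)}{2\pi ih}$, which by Abel summation against $\sum_{H<|h|\leqslant N}e(\theta h)\ll\|\theta\|^{-1}$ is $\ll\frac{1}{H\|\theta\|}$; combining the two regimes yields $g(\theta,H)\ll\min(1,1/(H\|\theta\|))$, i.e. exactly $g(\theta,H)=\min(1,1/(H\|\theta\|))$ up to an absolute constant, which is how the lemma states it.

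Next I would record that $g(\theta,H)$, as a bounded $1$-periodic function of $\theta$, has a Fourier expansion $g(\theta,H)=\sum_{h=-\infty}^{\infty}a_he(\theta h)$ with $a_h=\int_0^1 g(\theta,H)e(-\theta h)\,\mathrm{d}\theta$, and bound the coefficients in three ways. The bound $a_h\ll\frac{\log2H}{H}$ follows from $a_h\ll\int_0^1|g(\theta,H)|\,\mathrm{d}\theta$ together with $\int_0^1\min(1,1/(H\|\theta\|))\,\mathrm{d}\theta\ll\frac1H+\int_{1/H}^{1/2}\frac{\mathrm{d}\theta}{H\theta}\ll\frac{\log2H}{H}$. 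The bound $a_h\ll1/|h|$ comes from $a_h=-\frac{1}{2\pi ih}\int_0^1 g'(\theta,H)e(-\theta h)\,\mathrm{d}\theta$ after one integration by parts, using that the total variation of $g(\cdot,H)$ on $[0,1]$ is $O(1)$ (the derivative of $\psi$ is $1$ a.e. and the derivative of the trigonometric polynomial integrates to $O(1)$ by the same sine-sum estimate). The bound $a_h\ll H/|h|^2$ comes from a second integration by parts, exploiting that $g(\cdot,H)$ is piecewise smooth with jumps only at integer multiples of $1/H$ coming from the Fej\'er kernel structure, so its derivative has total variation $O(H)$; hence $a_h\ll\frac{1}{|h|^2}\cdot(\text{Var}\,g')\ll H/|h|^2$. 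Taking the minimum of the three gives the claimed coefficient estimate.

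The main obstacle I anticipate is the bookkeeping in the last two integration-by-parts steps: one has to identify $g(\theta,H)$ (or a smooth majorant of it of the same shape) precisely enough to control the total variation of its first and second "derivatives" in the distributional sense, since $\psi$ itself is only piecewise $C^\infty$ with unit jumps. The cleanest route is probably to replace the sharp truncation by Heath-Brown's explicit smoothed version --- i.e. use the representation where $g(\theta,H)$ is realized as a convolution of $\psi$ with a Fej\'er-type kernel of width $1/H$ --- so that all the variation bounds become transparent; once that representation is in hand, the three coefficient estimates and the pointwise bound all fall out of the same elementary estimates on geometric sums. Since the statement only asserts the existence of such a $g$ with these properties, it suffices to exhibit one concrete construction, and I would cite Heath-Brown and Graham--Kolesnik for the detailed verification rather than reproduce it in full.
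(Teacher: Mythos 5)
The paper itself offers no argument here beyond citing p.~245 of Heath--Brown, so your sketch has to be measured against his construction; it is close in outline, but it conflates two different objects, and the conflation makes one of your asserted bounds false. In the lemma (read with the evident typo repaired, i.e.\ the error term is $O\big(g(\theta,H)\big)$), the function $g(\theta,H)$ \emph{is} the explicit majorant $\min\big(1,1/(H\|\theta\|)\big)$, and the Fourier expansion $\sum_h a_he(\theta h)$ with $a_h\ll\min\big(\log 2H/H,\,1/|h|,\,H/|h|^2\big)$ is asserted for this function. You instead define $g$ to be the truncation error $\psi(\theta)+\sum_{0<|h|\leqslant H}e(\theta h)/(2\pi ih)$ and claim it agrees with the min function ``up to an absolute constant''. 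That is only an upper bound: the truncation error oscillates and vanishes at many points, so no matching lower bound holds. More seriously, for your $g$ the Fourier coefficients can be computed exactly: they are $-1/(2\pi ih)$ for $|h|>H$ and $0$ otherwise, and for $|h|$ large compared with $H$ (say $|h|=H^2$) one has $1/|h|\gg H/|h|^2$, so the claimed bound $a_h\ll H/|h|^2$ is simply false for the truncation error. Likewise your claim that this $g$ has total variation $O(1)$ on $[0,1]$ is wrong: the Dirichlet-kernel-type derivative of the trigonometric polynomial contributes variation of order $\log H$.

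The repair is to decouple the two assertions, which is exactly how the cited source arranges things: (i) show, by the trivial bound $|\psi|\leqslant 1/2$ plus partial summation on the tail $\sum_{|h|>H}e(\theta h)/(2\pi ih)$, that the truncation error is $\ll\min\big(1,1/(H\|\theta\|)\big)$; (ii) bound the Fourier coefficients of the function $\theta\mapsto\min\big(1,1/(H\|\theta\|)\big)$ itself. That function is even, continuous, equal to $1$ for $\|\theta\|\leqslant 1/H$ and to $1/(H\|\theta\|)$ otherwise, so it has $L^1$ norm $O(\log 2H/H)$, total variation $O(1)$, and a derivative of total variation $O(H)$; your three estimates (direct bound, one and two integrations by parts) then go through verbatim and yield the stated bounds on $a_h$ --- no ``Fej\'er kernel structure'' or jumps at multiples of $1/H$ is needed. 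Note the distinction is not cosmetic: the application in Section 4 of the paper expands $\sum_{n\sim N}\min\big(1,1/(H\|n^c\|)\big)$ and therefore uses the Fourier coefficients of the min function, not of the truncation error. With that decoupling your sketch does reproduce Heath--Brown's argument; as written, step (ii) is carried out on the wrong function.
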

\begin{proof}
  See pp. 245 of Heath--Brown~\cite{Heath-Brown} .
\end{proof}

\begin{lemma}\label{Buriev-lemma}
  Let $y$ be not an integer, $\alpha\in(0,1),\,H\geqslant3$. Then we have
  \begin{equation*}
     e(-\alpha\{y\})=\sum_{|h|\leqslant H}c_h(\alpha)e(hy)+O\bigg(\min\Big(1,\frac{1}{H\|y\|}\Big)\bigg),
  \end{equation*}
  where
  \begin{equation*}
    c_h(\alpha):=\frac{1-e(-\alpha)}{2\pi i(h+\alpha)}.
  \end{equation*}
\end{lemma}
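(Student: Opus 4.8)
The statement to prove is Lemma~\ref{Buriev-lemma}, which expands $e(-\alpha\{y\})$ into a finite exponential sum in $y$ with an error of size $\min(1,1/(H\|y\|))$. The plan is to regard $f(t) := e(-\alpha t)$ as a function on $[0,1)$, extend it periodically with period $1$, and compute its Fourier coefficients directly; the key point is that the Fourier series of this (piecewise smooth, with a jump at integers) function converges to $f(\{y\})$ away from integers, and the tail beyond $|h|\leqslant H$ is controlled by the Heath--Brown sawtooth kernel $g(y,H)=\min(1,1/(H\|y\|))$.

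Concretely, first I would compute, for $h\in\mathbb{Z}$,
\begin{equation*}
  c_h(\alpha)=\int_0^1 e(-\alpha t)e(-ht)\,\mathrm{d}t=\int_0^1 e(-(h+\alpha)t)\,\mathrm{d}t=\frac{1-e(-(h+\alpha))}{2\pi i(h+\alpha)}=\frac{1-e(-\alpha)}{2\pi i(h+\alpha)},
\end{equation*}
using $e(-h)=1$ since $h$ is an integer; this gives exactly the claimed formula for $c_h(\alpha)$, and in particular $c_h(\alpha)\ll_\alpha 1/|h|$ for $h\neq 0$. Then formally $e(-\alpha\{y\})=\sum_{h\in\mathbb{Z}}c_h(\alpha)e(hy)$ for non-integer $y$; the task is to bound the tail $R_H(y):=\sum_{|h|>H}c_h(\alpha)e(hy)$ by $\min(1,1/(H\|y\|))$. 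The bound $R_H(y)\ll 1$ is immediate if we instead argue that the truncated sum stays bounded, but the genuinely useful bound is the one decaying in $\|y\|$, which I would get by partial summation (Abel summation) on the tail: since $\sum_{H<|h|\leqslant K}e(hy)\ll 1/\|y\|$ uniformly in $K$ and $c_h(\alpha)$ is of bounded variation in $h$ with $c_h(\alpha)\ll 1/|h|$, summation by parts yields $R_H(y)\ll 1/(H\|y\|)$. Combining with the trivial bound gives $R_H(y)\ll\min(1,1/(H\|y\|))$, which is the error term claimed.

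Alternatively — and this is probably the cleaner route to write — I would avoid re-deriving Fourier tail estimates from scratch by reducing to Lemma~\ref{Heath-Brown-lemma}. Write $\{y\}=y-[y]=y+1/2-(\psi(y)+1/2)\cdot$(something)$\ldots$ more precisely use $\psi(y)=\{y\}-1/2$, so $\{y\}=\psi(y)+1/2$ and hence $e(-\alpha\{y\})=e(-\alpha/2)e(-\alpha\psi(y))$. Then one expands $e(-\alpha\psi(y))$: since $\psi(y)=-\sum_{0<|h|\leqslant H}\frac{e(hy)}{2\pi i h}+g(y,H)$ with $g(y,H)=\min(1,1/(H\|y\|))$, and $e(-\alpha\psi(y))$ is a smooth bounded function of $\psi(y)$, a first-order Taylor expansion of the exponential together with the bound $|\psi(y)|\leqslant 1/2$ allows one to transfer the sawtooth approximation through, at the cost of products of at most $H$-term sums; organizing these products as a single sum over $|h|\leqslant H$ with the stated coefficients $c_h(\alpha)$ and collecting all remaining terms into $O(\min(1,1/(H\|y\|)))$ completes the argument.

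The main obstacle is bookkeeping in the second approach: expanding $e(-\alpha\psi(y))$ produces, in principle, convolution powers of the truncated sawtooth sum, and one must verify that after truncating at level $H$ the leftover really is $O(g(y,H))$ and not something larger — this requires care with how $g(y,H)^2\leqslant g(y,H)$ and how cross terms between the main sum and $g$ are absorbed. For this reason I expect the direct Fourier-coefficient computation plus Abel-summation tail bound (first approach) to be the safest: the only nontrivial input there is the uniform bound $\big|\sum_{H<|h|\leqslant K}e(hy)\big|\ll\min(K-H,1/\|y\|)$, which is the standard geometric-series estimate, and the rest is the exact evaluation of $c_h(\alpha)$ displayed above.
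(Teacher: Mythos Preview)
Your first approach is correct and is the standard derivation: compute the Fourier coefficients of the $1$-periodic function $t\mapsto e(-\alpha t)$ on $[0,1)$ exactly as you did, then bound the tail $\sum_{|h|>H}c_h(\alpha)e(hy)$ by Abel summation against the geometric-series estimate $\big|\sum_{H<h\leqslant K}e(hy)\big|\ll\min(K-H,\|y\|^{-1})$. The paper itself gives no argument at all --- it simply cites Buriev's thesis --- so your write-up supplies strictly more than what the paper contains.

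One small point to tighten in your first approach: the uniform bound $R_H(y)\ll 1$ (needed when $H\|y\|<1$) is not quite ``immediate'' from a one-sided Abel summation, since $\sum_{0\leqslant h\leqslant H}(h+\alpha)^{-1}e(hy)$ by itself can be as large as $\log H$ when $\|y\|$ is tiny. The fix is to pair $h$ with $-h$ before summing; the combined term is $O(h^{-2})+O(h^{-1}\sin(2\pi hy))$, and the resulting series $\sum_{h\leqslant H}h^{-1}\sin(2\pi hy)$ is the sawtooth Fourier series, whose partial sums are uniformly bounded. Equivalently, invoke the uniform boundedness of Fourier partial sums of a function of bounded variation. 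With that adjustment your argument is complete. Your second approach via $e(-\alpha\psi(y))$ is, as you suspected, needlessly circuitous; stick with the direct computation.
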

\begin{proof}
  See the thesis of Buriev~\cite{Buriev}.
\end{proof}

\section{Proof of Theorem~\ref{Theorem}}

In this section, we shall prove Theorem~\ref{Theorem}. Let $c$ and $\varepsilon$ satisfy the conditions in Theorem~\ref{Theorem}. It is well known that the
characteristic function of cube--free numbers is
\begin{equation*}
\sum_{d^3\mid n}\mu(d)=\left\{
    \begin{array}{lc}
      0, & \exists m\,\,\, \textrm{s.t. $m^3|n$}, \\
      1, & \textrm{others}.
    \end{array}
\right.
\end{equation*}
Then, we can write
\begin{align}\label{A^c-total}
   \mathscr{A}^c\cap\mathfrak{F}_3(x)
   & :=  \sum_{\substack{n\leqslant x\\ n\in\mathscr{A},\, [n^c]\in\mathfrak{F}_3}}1
   =\sum_{\substack{n\leqslant x\\ n\in\mathscr{A}}}\sum_{d^3\mid[n^c]}\mu(d)
              \nonumber \\
   & =\sum_{\substack{n\leqslant x\\ n\in\mathscr{A}}}\sum_{\substack{d^3|[n^c]\\ d\leqslant x^{\varepsilon}}}\mu(d)
      +\sum_{\substack{n\leqslant x\\ n\in\mathscr{A}}}\sum_{\substack{d^3|[n^c]\\ d> x^{\varepsilon}}}\mu(d)
              \nonumber \\
   & =:\Sigma_1+\Sigma_2.
\end{align}
From the formula
\begin{equation*}
  \sum_{h=1}^qe\bigg(\frac{hn}{q}\bigg)=\left\{
     \begin{array}{ll}
         q, & \quad \textrm{if}\,\, q\mid n, \\
         0, & \quad \textrm{if}\,\,q\nmid n,
     \end{array}
  \right.
\end{equation*}
we get
\begin{align}
  \Sigma_1 &=  \sum_{\substack{m\leqslant x\\ m\in\mathscr{A}}} \sum_{d\leqslant x^{\varepsilon}}\frac{\mu(d)}{d^3}
               \sum_{\ell=1}^{d^3}e\bigg(\frac{\ell[m^c]}{d^3}\bigg)
            =   \sum_{d\leqslant x^{\varepsilon}}\frac{\mu(d)}{d^3}\sum_{\ell=1}^{d^3}
               \sum_{\substack{m\leqslant x\\ m\in\mathscr{A}}}e\bigg(\frac{\ell[m^c]}{d^3}\bigg)      \nonumber \\
           &=  \sum_{d\leqslant x^{\varepsilon}} \frac{\mu(d)}{d^3}\mathscr{A}(x)
               + \sum_{2\leqslant d\leqslant x^{\varepsilon}}\frac{\mu(d)}{d^3}\sum_{\ell=1}^{d^3-1}
               \sum_{\substack{m\leqslant x\\ m\in\mathscr{A}}}e\bigg(\frac{\ell[m^c]}{d^3}\bigg) .
\end{align}
Taking $\eta=2\varepsilon$ in (\ref{condition-2}), then there exists $\delta$ satisfying $2\varepsilon\leqslant\delta\leqslant1/2$. From (\ref{condition-2})
we obtain
\begin{equation*}
   \sum_{2\leqslant d\leqslant x^\varepsilon}\frac{\mu(d)}{d^3}\sum_{\ell=1}^{d^3-1}\sum_{\substack{m\leqslant x\\ m\in\mathscr{A}}}
   e\bigg(\frac{\ell[m^c]}{d^3}\bigg)\ll
   x^{1-\delta}\sum_{2\leqslant d\leqslant x^\varepsilon}\frac{d^3-1}{d^3} \ll x^{1-\varepsilon}.
\end{equation*}
It is easy to see that
\begin{equation}\label{sigma-1-partial}
   \sum_{d\leqslant x^{\varepsilon}}\frac{\mu(d)}{d^3}=\sum_{d=1}^\infty\frac{\mu(d)}{d^3}+O(x^{-2\varepsilon})=\frac{1}{\zeta(3)}+O(x^{-2\varepsilon}).
\end{equation}
From (\ref{sigma-1-partial}) and the fact that $\mathscr{A}\subseteq\mathbb{N}$, we get
\begin{equation}\label{sigma-1}
   \Sigma_1=\frac{1}{\zeta(3)}\mathscr{A}(x)+O(x^{1-2\varepsilon}).
\end{equation}

Now we estimate $\Sigma_2$. We have
\begin{align}\label{sigma-2-1}
  \Sigma_2 & = \sum_{\substack{m\leqslant x\\ m\in\mathscr{A}}}\sum_{\substack{d^3\ell\leqslant m^c<d^3\ell+1\\ d>x^{\varepsilon}}}\mu(d)
             \,\, \ll \,\, \sum_{m\leqslant x}\sum_{\substack{d^3\ell-2<m^c\leqslant d^3\ell+2\\ d>x^\varepsilon}}1
                \nonumber \\
     & \ll\sum_{m\leqslant x} \sum_{\substack{(d^3\ell-2)^\gamma<m\leqslant(d^3\ell+2)^\gamma\\ d>x^{\varepsilon}}}1
       \ll \sum_{\substack{d^3\ell\leqslant x^c\\d>x^\varepsilon}}
       \Big([(d^3\ell+2)^\gamma]-[(d^3\ell-2)^\gamma]\Big)
               \nonumber \\
     & \ll (\log x)^2 \sum_{d\sim D}\sum_{\ell\sim L} \Big([(d^3\ell+2)^\gamma]-[(d^3\ell-2)^\gamma]\Big)
\end{align}
for some pair $(D,L)$, where $x^{\varepsilon}\ll D\ll x^{c/3},\,1\ll L\ll x^{c-3\varepsilon},\, D^3L\ll x^c.$

 If $d\sim D,\,\ell\sim L$, then, by Lagrange's mean value theorem, we get
\begin{equation*}
   (d^3\ell+2)^\gamma-(d^3\ell)^\gamma<2\gamma(d^3\ell)^{\gamma-1}<2\gamma(D^3L)^{\gamma-1}<4\gamma(D^3L)^{\gamma-1}
\end{equation*}
and
\begin{equation*}
   (d^3\ell)^\gamma-(d^3\ell-2)^\gamma<2\gamma(d^3\ell-2)^{\gamma-1}<2\gamma(d^3\ell/2)^{\gamma-1}<4\gamma(D^3L)^{\gamma-1}.
\end{equation*}
Therefore, we obtain
\begin{align}\label{sigma-2-2}
  & \qquad \sum_{d\sim D}\sum_{\ell\sim L}\Big([(d^3\ell+2)^\gamma]-[(d^3\ell-2)^\gamma]\Big) \nonumber \\
  &  = \sum_{\substack{d\sim D,\,\ell\sim L\\ (d^3\ell-2)^\gamma<m\leqslant (d^3\ell+2)^\gamma}}1
          % \nonumber \\
   \ll \sum_{\substack{d\sim D,\,\ell\sim L\\ (d^3\ell)^\gamma-4\gamma(D^3L)^{\gamma-1}<m\leqslant(d^3\ell)^\gamma+4\gamma(D^3L)^{\gamma-1} }}1
           \nonumber \\
  & =\sum_{d\sim D}\sum_{\ell\sim L}\Big([(d^3\ell)^\gamma+4\gamma(D^3L)^{\gamma-1}]-[(d^3\ell)^\gamma-4\gamma(D^3L)^{\gamma-1}]\Big)
            \nonumber \\
  & \ll (D^3L)^{\gamma-1}DL
        +\left|\sum_{d\sim D}\sum_{\ell\sim L}\psi\big((d^3\ell)^{\gamma}-4\gamma(D^3L)^{\gamma-1}\big)\right|
            \nonumber \\
  &    \qquad +\left|\sum_{d\sim D}\sum_{\ell\sim L}\psi\big((d^3\ell)^{\gamma}+4\gamma(D^3L)^{\gamma-1}\big)\right|
             \nonumber \\
  & =: (D^3L)^{\gamma-1}DL+|\mathcal{T}_+(D,L)|+|\mathcal{T}_-(D,L)|
              \nonumber \\
  & \ll x^{1-2\varepsilon}+|\mathcal{T}_+(D,L)|+|\mathcal{T}_-(D,L)|,
\end{align}
where the last step uses the following estimate
\begin{equation*}
   (D^3L)^{\gamma-1}DL=(D^3L)^{\gamma-1}D^3L\cdot D^{-2}=(D^3L)^{\gamma}\cdot D^{-2}\ll x^{1-2\varepsilon}.
\end{equation*}

From (\ref{A^c-total}), (\ref{sigma-1}), (\ref{sigma-2-1}) and (\ref{sigma-2-2}), we can see that it is sufficient to show
\begin{equation}\label{T+--condition}
  \mathcal{T}_+(D,L)\ll x^{1-2\varepsilon},\quad \mathcal{T}_-(D,L)\ll x^{1-2\varepsilon}.
\end{equation}
Set $N:=D^3L$. We shall prove
\begin{equation}
  \mathcal{T}_+(D,L)\ll N^{\gamma-2\varepsilon},\quad \mathcal{T}_-(D,L)\ll N^{\gamma-2\varepsilon},
\end{equation}
from which we can deduce (\ref{T+--condition}) immediately.
\noindent
If $D\gg N^{(1-\gamma+2\varepsilon)/2}$, then we have
\begin{equation*}
   \mathcal{T}_\pm(D,L)\ll DL=D^3L\cdot D^{-2}=ND^{-2}\ll N^{1-(1-\gamma+2\varepsilon)}=N^{\gamma-2\varepsilon}.
\end{equation*}
Thus, from what follows, we always assume that $D\ll N^{(1-\gamma+2\varepsilon)/2}$ and $DL\geqslant100 N^{\gamma-2\varepsilon}$.

 Taking $J=[D^2LN^{2\varepsilon-\gamma}]$ in Lemma~\ref{Vaaler-lemma}, then we have
\begin{eqnarray}\label{T(D,L)=I+II}
   \mathcal{T}_\pm(D,L) & = &\sum_{d\sim D}\sum_{\ell\sim L}\Bigg(\sum_{1\leqslant|h|\leqslant J}
                   a(h)e\Big(h\big((d^3\ell)^\gamma\pm4\gamma(D^3L)^{\gamma-1}\big)\Big)
                        \nonumber \\
   &  &  \qquad+O\bigg(\sum_{|h|\leqslant J}b(h)e\Big(h\big((d^3\ell)^\gamma\pm4\gamma(D^3L)^{\gamma-1}\big)\Big) \bigg)\Bigg)
                      \nonumber \\
   & = & \textbf{I}+\textbf{II},
\end{eqnarray}
where
\begin{eqnarray}\label{I-estimate}
  \textbf{I} & = & \sum_{1\leqslant|h|\leqslant J}a(h)\sum_{d\sim D}\sum_{\ell\sim L}e\Big(h\big((d^3\ell)^\gamma\pm4\gamma(D^3L)^{\gamma-1}\big)\Big)
                        \nonumber \\
       & \ll & \sum_{1\leqslant h\leqslant J}\frac{1}{h}\Bigg|\sum_{d\sim D}\sum_{\ell\sim L}e\big(h(d^3\ell)^\gamma\big)\Bigg|,
\end{eqnarray}
\begin{eqnarray}\label{II-estimate}
  \textbf{II} & \ll & \sum_{d\sim D}\sum_{\ell\sim L}  \sum_{|h|\leqslant J}b(h)e\Big(h\big((d^3\ell)^\gamma\pm4\gamma(D^3L)^{\gamma-1}\big)\Big)
                          \nonumber \\
  & \ll & \frac{DL}{J}+\sum_{1\leqslant h\leqslant J}|b(h)| \Bigg|\sum_{d\sim D}\sum_{\ell\sim L}e\big(h(d^3\ell)^\gamma\big)\Bigg|
                          \nonumber \\
  & \ll & N^{\gamma-2\varepsilon}+\sum_{1\leqslant h\leqslant J}\frac{1}{J}\Bigg|\sum_{d\sim D}\sum_{\ell\sim L}e\big(h(d^3\ell)^\gamma\big)\Bigg|
                         \nonumber \\
  & \ll & N^{\gamma-2\varepsilon}+\sum_{1\leqslant h\leqslant J}\frac{1}{h}\Bigg|\sum_{d\sim D}\sum_{\ell\sim L}e\big(h(d^3\ell)^\gamma\big)\Bigg|.
\end{eqnarray}
Combining (\ref{T(D,L)=I+II}), (\ref{I-estimate}) and (\ref{II-estimate}), we derive
\begin{eqnarray}
   \mathcal{T}_\pm(D,L) & \ll & N^{\gamma-2\varepsilon}+\sum_{1\leqslant h\leqslant J}\frac{1}{h}
                                 \Bigg|\sum_{d\sim D}\sum_{\ell\sim L}e\big(h(d^3\ell)^\gamma\big)\Bigg|
                                     \nonumber \\
     & \ll &  N^{\gamma-2\varepsilon}+\frac{1}{H}\big|\mathcal{S}(H,D,L)\big|\cdot \log J,
\end{eqnarray}
for some $1\ll H\ll J$, where
\begin{equation*}
   \mathcal{S}(H,D,L):=\sum_{h\sim H}\Bigg|\sum_{d\sim D}\sum_{\ell\sim L}e\big(h(d^3\ell)^\gamma\big)\Bigg|.
\end{equation*}
Therefore, we only need to show
\begin{equation}\label{S(H,D,L)-upper}
   \mathcal{S}(H,D,L)\ll HN^{\gamma-2\varepsilon}.
\end{equation}
Let $F=HN^\gamma$. Thus, we have $N^\gamma\ll F\ll D^2LN^{2\varepsilon}$. Next, in order to prove (\ref{S(H,D,L)-upper}), we shall
consider the following three cases.

\noindent
\textbf{Case 1} If $D\ll N^{2\gamma-1-6\varepsilon}$, we use exponential pair $(1/2,1/2)$ to estimate the inner sum over $\ell$ and apply
trivial estimate to the sum over $h$ and $d$. Thus, we get
\begin{eqnarray*}
   \mathcal{S}(H,D,L) & \ll & \frac{HD}{HD^{3\gamma}L^{\gamma-1}}+HD\big((HD^{3\gamma}L^{\gamma-1})^{1/2}L^{1/2}\big)    \\
    &  =  &  \frac{DL}{N^\gamma}+HD\big((HN^\gamma L^{-1})^{1/2}L^{1/2}\big)      \\
    & \ll &  N^{1-\gamma}+HD(JN^\gamma)^{1/2} \ll N^{1-\gamma}+HD(D^2L)^{1/2}N^{\varepsilon}  \\
    & = &    N^{1-\gamma}+HD^{1/2}(D^3L)^{1/2}N^\varepsilon      \\
    & \ll &  N^{1-\gamma}+HD^{1/2}N^{1/2+\varepsilon} \ll HN^{\gamma-2\varepsilon}.
\end{eqnarray*}

\noindent
\textbf{Case 2} If $N^{2\gamma-1-6\varepsilon}\ll D\ll N^{6\gamma-3-22\varepsilon}$, by Theorem 7 of Cao and Zhai~\cite{Cao-Zhai-Acta-Arith} with
parameters $(M,M_1,M_2)=(L,H,D)$, we obtain
\begin{eqnarray*}
               N^{-\varepsilon}\cdot \mathcal{S}(H,D,L)
   & \ll &  (F^2L^3H^7D^7)^{1/8}+(F^4H^7D^7)^{1/8}+(F^{18}L^{15}H^{54}D^{54})^{1/58}       \\
   &     &  +(F^{35}L^{26}H^{100}D^{100})^{1/108}+(F^{31}L^{24}H^{92}D^{92})^{1/98}        \\
   &     &  +(F^{10}L^{6}H^{27}D^{27})^{1/29}+(F^{111}L^{86}H^{294}D^{294})^{1/336}        \\
   &     &  +(F^{103}L^{74}H^{266}D^{266})^{1/304}+(F^{119}L^{74}H^{294}D^{294})^{1/336}   \\
   &     &  +(F^{80}L^{19}H^{188}D^{188})^{1/200}+(F^{149}L^{34}H^{344}D^{344})^{1/368}    \\
   &     &  +(F^{43}L^{5}H^{94}D^{94})^{1/100}+(F^2LH^6D^6)^{1/6}                          \\
   &     &  +(F^{4}L^{-1}H^8D^8)^{1/8}+F^{-1/2}LHD                                         \\
   & \ll &  HN^{\gamma-3\varepsilon}.
\end{eqnarray*}

\noindent
\textbf{Case 3} If $D\gg N^{6\gamma-3-22\varepsilon}$, by Theorem 3 of Robert and Sargos~\cite{Robert-Sargos} with parameters $(H,N,M)=(H,D,L)$, we
deduce that
\begin{eqnarray*}
           N^{-2\varepsilon}\cdot \mathcal{S}(H,D,L)
  & \ll &  (HDL)\bigg(\Big(\frac{F}{HDL^2}\Big)^{1/4}+\frac{1}{L^{1/2}}+\frac{1}{F}\bigg)   \\
  & \ll &  HN^{\gamma/4}D^{3/4}L^{1/2}+HDL^{1/2}+DLN^{-\gamma}\ll HN^{\gamma-4\varepsilon}
\end{eqnarray*}
under the condition $D\gg N^{(2-3\gamma+16\varepsilon)/3}$. Moreover, by noting the fact that $\gamma>6/11$, there must hold
$N^{(2-3\gamma+16\varepsilon)/3}\ll N^{6\gamma-3-22\varepsilon}$.

Combining the above three cases, we have finished the proof of Theorem~\ref{Theorem}.

\section{Proof of Corollary~\ref{corollary}}
In this section, we shall prove Corollary~\ref{corollary}. Take $\eta=2\varepsilon$ in (\ref{condition-2}).
Suppose that $2\leqslant d\leqslant x^\varepsilon,\, 1\leqslant\ell \leqslant d^2-1$, then there exist a pair of integers $a$ and $q$ satisfying
$2\leqslant q\leqslant x^\eta,\,1\leqslant a\leqslant q-1,\,(a,q)=1$. Denote $\alpha=a/q$ and
\begin{equation*}
   S_c(x;\mathscr{A},\alpha):=\sum_{\substack{n\leqslant x\\ n\in\mathscr{A}}}e\big(\alpha[n^c]\big).
\end{equation*}
In order to prove Corollary~\ref{corollary}, we need to prove the following lemma.
\begin{lemma}\label{suppli-condition}
   Let $1<c<2$ and $0<\eta(c-1)/100$ be a sufficiently small constant, then we have
   \begin{eqnarray}
       S_c(x;\mathbb{N},\alpha)   & \ll & x^{1-(3-c)/7}\log x,   \label{lemma-1}\\
       S_c(x;\mathscr{P},\alpha)  & \ll & x^{1-(5-2c)/90}\log^{19}x  \label{lemma-2}\\
       S_c(x;\mathfrak{F}_3,\alpha)  & \ll & x^{1-(11-4c)/22}\log^2x.  \label{lemma-3}
   \end{eqnarray}
\end{lemma}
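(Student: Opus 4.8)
The plan is to reduce all three estimates to bounds for ordinary exponential sums by one common device, and then supply, case by case, a sufficiently sharp such bound; the work is concentrated almost entirely in the prime case. I would first carry out the standard reduction: writing $e(\alpha[n^c])=e(\alpha n^c)e(-\alpha\{n^c\})$ and applying Lemma~\ref{Buriev-lemma} to the factor $e(-\alpha\{n^c\})$ with a parameter $H$ (a small power of $x$, to be fixed at the end), one expresses $S_c(x;\mathscr{A},\alpha)$ as a main part
\[
  \sum_{|h|\leqslant H}c_h(\alpha)\sum_{\substack{n\leqslant x\\ n\in\mathscr{A}}}e\big((\alpha+h)n^c\big)
\]
plus an error which, since $\mathscr{A}\subseteq\mathbb{N}$, is at most
\[
  \sum_{n\leqslant x}\min\Big(1,\frac{1}{H\|n^c\|}\Big)
  \ll \frac{x\log H}{H}+\sum_{h\geqslant1}\min\Big(\frac1h,\frac{H}{h^2}\Big)\Big|\sum_{n\leqslant x}e(hn^c)\Big|
\]
by Lemma~\ref{Heath-Brown-lemma}. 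From $c_h(\alpha)=(1-e(-\alpha))/(2\pi i(h+\alpha))$, $|1-e(-\alpha)|=2|\sin\pi\alpha|$ and $1/q\leqslant\alpha\leqslant1-1/q$ one has $|c_h(\alpha)|\ll\min(1,1/|h|)$, so everything is reduced to bounding, for real $\beta$ with $|\beta|\gg x^{-\eta}$, the sums $\sum_{n\leqslant x,\,n\in\mathscr{A}}e(\beta n^c)$ and $\sum_{n\leqslant x}e(\beta n^c)$; the at most $O(x^{1/2})$ integers $n\leqslant x$ with $n^c\in\mathbb{Z}$ (only an issue when $c\in\mathbb{Q}$) are discarded at the outset, contributing below every target exponent.

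For $\mathscr{A}=\mathbb{N}$ I would estimate the inner power sum by the exponent pair $(\kappa,\lambda)=(1/6,2/3)$ — obtained from $(1/2,1/2)$ by the $A$-process, i.e. van der Corput's third-derivative test — giving $\sum_{n\sim N}e(\beta n^c)\ll(\beta N^{c-1})^{1/6}N^{2/3}$, which is legitimate since $\beta N^{c-1}\gg1$ in the relevant range. Summing against the weights $1/|h|$ and $H/|h|^2$ over dyadic blocks of $h$ produces a contribution $\ll H^{1/6}x^{(c+3)/6}\log x$, while the error term keeps the shape $x(\log x)/H$; taking $H=[x^{(3-c)/7}]$ balances the two, and since $(c+4)/7=1-(3-c)/7$ this yields (\ref{lemma-1}). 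For $\mathscr{A}=\mathfrak{F}_3$ I would insert the identity $\mathbf{1}_{\mathfrak{F}_3}(n)=\sum_{d^3\mid n}\mu(d)$, so that $\sum_{n\leqslant x,\,n\in\mathfrak{F}_3}e(\beta n^c)=\sum_{d\leqslant x^{1/3}}\mu(d)\sum_{m\leqslant x/d^3}e(\beta d^{3c}m^c)$; the inner sum over $m$ is again a pure power sum, and the same exponent pair gives $\ll\beta^{1/6}d^{-3/2}x^{(c+3)/6}$, to be used where it beats the trivial bound $x/d^3$ and the trivial bound used for the remaining (large) $d$. Splitting the $d$-sum at the crossover point, the exponent-pair contribution sums (via $\sum_d d^{-3/2}<\infty$) to $\ll\beta^{1/6}x^{(c+3)/6}$ and the trivial tail to $\ll\beta^{2/9}x^{(3+2c)/9}$; feeding this back through the reduction, summing over $h$, and choosing $H$ as a suitable small power of $x$ to balance the resulting terms gives (\ref{lemma-3}), the extra logarithm coming from the $d$-summation.

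The case $\mathscr{A}=\mathscr{P}$ is the main obstacle. Here one must bound $\sum_{p\leqslant x}e(\beta p^c)$ uniformly in $\beta$, and for this I would follow the classical Piatetski--Shapiro route: decompose the sum over primes by Vaughan's identity (or Heath--Brown's identity) into $O(\log^{2}x)$ Type~I sums $\sum_{m\leqslant M}a_m\sum_{n\sim N}e(\beta(mn)^c)$ and Type~II sums $\sum_{m\sim M}\sum_{n\sim N}a_mb_n\,e(\beta(mn)^c)$ with $MN\asymp x$ and $M$ confined to suitable ranges. The Type~I sums are disposed of by applying exponent pairs to the inner sum over $n$ exactly as in the $\mathbb{N}$ case and then summing trivially over $m$. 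The Type~II sums are where the real work lies: one applies Cauchy--Schwarz in $m$ followed by van der Corput (Weyl) differencing in $n$, reducing to sums of the form $\sum_r\big|\sum_n e\big(\beta m^c\big((n+r)^c-n^c\big)\big)\big|$, to which exponent pairs again apply; optimising the Type~I/Type~II splitting point, the differencing parameter, and the choice of $H$ then produces the savings $(5-2c)/90$ claimed in (\ref{lemma-2}), with the power $\log^{19}x$ absorbing Vaughan's identity, the several dyadic decompositions, and the Cauchy--Schwarz loss. The quality of this Type~II estimate is exactly what caps the exponent, so that step is where essentially all the effort in the lemma is concentrated.
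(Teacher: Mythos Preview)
Your reduction via Lemma~\ref{Buriev-lemma} and Lemma~\ref{Heath-Brown-lemma} is exactly what the paper does, and your treatment of~(\ref{lemma-1}) with the pair $(1/6,2/3)$ is fine (the paper simply cites Cao--Zhai~\cite{Cao-Zhai-2} for both~(\ref{lemma-1}) and~(\ref{lemma-2}) and proves only~(\ref{lemma-3})). The prime case~(\ref{lemma-2}) is indeed handled in~\cite{Cao-Zhai-2} by a Type~I/Type~II decomposition of the kind you sketch, so your outline is on the right track there, though you would have to check that the balancing really produces the specific saving $(5-2c)/90$.

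The genuine gap is in~(\ref{lemma-3}). Your argument for $\mathfrak{F}_3$ inserts $\sum_{d^3\mid n}\mu(d)$ and applies the \emph{same} exponent pair $(1/6,2/3)$ to the inner sum, obtaining a main contribution $\beta^{1/6}x^{(c+3)/6}$. Balancing this (after the $h$-sum) against $x(\log x)/H$ gives $H\asymp x^{(3-c)/7}$ and a final bound $x^{1-(3-c)/7}$, \emph{not} $x^{1-(11-4c)/22}$. Since $(11-4c)/22>(3-c)/7$ throughout $1<c<11/6$, your choice of exponent pair is too weak to reach the stated exponent in~(\ref{lemma-3}). The paper instead uses the pair $(\kappa,\lambda)=(2/9,11/18)$ both for the error sum $\sum_{n\sim N}\min(1,1/(H\|n^c\|))$ and for the inner sum after the M\"obius decomposition; with this pair the main contribution is $\beta^{2/9}N^{(4c+7)/18}$, the $d$-sum still converges (the exponent on $d$ is $-7/6$), and the balancing $H=N^{\delta}$ with $\delta=(11-4c)/22$ gives exactly the claimed saving. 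So your plan is structurally correct but the numerical input --- the exponent pair --- has to be upgraded from $(1/6,2/3)$ to $(2/9,11/18)$ for the $\mathfrak{F}_3$ estimate to come out as stated. (Your weaker saving $(3-c)/7$ would still be enough for the application in Corollary~\ref{corollary}, but it does not prove the lemma as written.)
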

\begin{proof}
    We only need to prove (\ref{lemma-3}), since (\ref{lemma-1}) and (\ref{lemma-2}) are from Lemma 2 of \cite{Cao-Zhai-2}. Obviously, it is sufficient
to prove that the following estimate
\begin{equation}
   S^*_c(N;\mathfrak{F}_3,\alpha):=\sum_{\substack{n\sim N\\ n\in \mathfrak{F}_3}} e\big(\alpha[n^c]\big)\ll N^{1-\delta}\log^\varpi N
\end{equation}
holds with $x^{3/4}\ll N \ll x$.

Taking $H=N^\delta,\, \delta=(11-4c)/22$ in Lemma~\ref{Buriev-lemma}, we get
\begin{eqnarray*}
   S_c^*(N;\mathfrak{F}_3,\alpha) & = & \sum_{\substack{n\sim N\\ n\in\mathfrak{F}_3}} e\big(\alpha n^c-\alpha\{n^c\}\big)  \nonumber \\
   & = & \sum_{\substack{n\sim N\\ n\in\mathfrak{F}_3}}e\big(\alpha n^c\big)
         \bigg(\sum_{|h|\leqslant H}c_h(\alpha)e(hn^c)+O\Big(\min\Big(1,\frac{1}{H\| n^c\|}\Big)\Big)\bigg)           \nonumber \\
   & = & \sum_{|h|\leqslant H}c_h(\alpha)\sum_{\substack{n\sim N\\ n\in\mathfrak{F}_3}}e\big((h+\alpha)n^c\big)+
         O\bigg(\sum_{n\sim N}\min\Big(1,\frac{1}{H\|n^c\|}\Big)\bigg).
\end{eqnarray*}
From Lemma~\ref{Heath-Brown-lemma}, we obtain
\begin{eqnarray*}
   \sum_{n\sim N}\min\Big(1,\frac{1}{H\|n^c\|}\Big)
         & = & \sum_{n\sim N}\sum_{h=-\infty}^{+\infty}a(h)e(hn^c)=\sum_{h=-\infty}^{+\infty}a(h)\sum_{n\sim N}e(hn^c)      \\
         & = & N\cdot a(0)+\sum_{\substack{h=-\infty\\ h\not=0}}^{+\infty}a(h)\sum_{n\sim N}e(hn^c)             \\
         & \ll & N|a(0)|+\sum_{h=1}^{\infty}|a(h)|\left|\sum_{n\sim N}e(hn^c)\right|       \\
\end{eqnarray*}
\begin{eqnarray*}
         & \ll & N^{1-\delta}\log N+\sum_{h=1}^{\infty}\min\bigg(\frac{1}{h},\frac{H}{h^2}\bigg)\big(hN^{c-1}\big)^{4/18}N^{11/18}     \\
         & \ll & N^{1-\delta}\log N+\sum_{h\leqslant H}h^{-7/9}N^{(4c+7)/18}+\sum_{h>H}\frac{H}{h^{16/9}}N^{(4c+7)/18}      \\
         & \ll & N^{1-\delta}\log N+N^{(4c+7)/18}H^{2/9}          \\
         & \ll & N^{1-\delta}\log N +N^{(4c+7+4\delta)/18}\ll N^{1-\delta}\log N,
\end{eqnarray*}
where we use the exponential pair $(4/18,11/18)$ to estimate the sum over $n$.

Since $2\leqslant q\leqslant x^\eta,\,1\leqslant a\leqslant q-1,\,(a,q)=1$, then $x^{-\eta}\leqslant\alpha=a/q\leqslant(q-1)/q=1-1/q\leqslant1-x^{-\eta}$.
Thus, for $|h|\leqslant H$, there holds $x^{-\eta}\leqslant|h+\alpha|\ll N^{\delta}$. Noting that $0<\eta<(c-1)/100$, we get
$|h+\alpha|N^{c-1}\gg x^{-\eta}x^{3(c-1)/4}\gg 1$.

Therefore, we deduce that
\begin{eqnarray*}
    \sum_{\substack{n\sim N\\ n\in\mathfrak{F}_3}}e\big((h+\alpha)n^c\big)
    &  = &   \sum_{n\sim N}\bigg(\sum_{m^3|n}\mu(m)\bigg)e\big((h+\alpha)n^c\big)     \\
    & = &  \sum_{N<m^3t\leqslant2N}\mu(m)e\big((h+\alpha)m^{3c}t^c\big)       \\
    & = & \sum_{\substack{N<m^3n\leqslant 2N\\ m\leqslant N^{\delta}}}\mu(m)e\big((h+\alpha)m^{3c}n^c\big)
            +O(N^{1-2\delta})    \\
    & = & \sum_{m\leqslant N^{\delta}}\mu(m)\sum_{\frac{N}{m^3}<n\leqslant\frac{2N}{m^3}}e\big((h+\alpha)m^{3c}n^c\big)+O(N^{1-2\delta}) \\
    & \ll & \sum_{m\leqslant N^{\delta}}\bigg(|h+\alpha|m^{3c}\Big(\frac{N}{m^3}\Big)^{c-1}\bigg)^{4/18}\bigg(\frac{N}{m^3}\bigg)^{11/18}+N^{1-2\delta} \\
    & \ll & N^{(4c+7+4\delta)/18}\bigg(\sum_{m\leqslant n^\delta}m^{-7/6}\bigg)+N^{1-2\delta}   \\
    & \ll & N^{(4c+7+\delta)/18}+N^{1-2\delta}\ll N^{1-\delta},
\end{eqnarray*}
where we use the exponential pair $(4/18,11/18)$ to estimate the sum over $n$. From (26) of Cao and Zhai~\cite{Cao-Zhai-2}, we have
\begin{equation*}
   \sum_{|h|\leqslant H}c_h(\alpha)\sum_{\substack{n\sim N\\ n\in\mathfrak{F}_3}}e\big((h+\alpha)n^c\big)\ll N^{1-\delta}\log N.
\end{equation*}
This completes the proof of Lemma~\ref{suppli-condition}.
\end{proof}

From the three following formulas
\begin{eqnarray*}
 \mathbb{N}(x) & = & x+O(1), \\
 \mathfrak{F}_3(x) & = & \frac{x}{\zeta(3)}+O(x^{1/2+\varepsilon}),  \\
 \mathscr{P}(x) & = & \int_2^x\frac{\mathrm{d}u}{\log u}+O(xe^{-c_0\sqrt{\log x}}),
\end{eqnarray*}
Theorem~\ref{Theorem} and Lemma~\ref{suppli-condition}, we know that Corollary~\ref{corollary} holds.

\bigskip
\bigskip

\textbf{Acknowledgement}

   The authors would like to express the most and the greatest sincere gratitude to Professor Wenguang Zhai for his valuable
advice and constant encouragement.

\end{document}